\journal{Australasian Journal of Combinatorics}
 \newtheorem{theorem}{Theorem}
 \newtheorem{definition}{Definition}
 \newenvironment{proof}[1][Proof]{\textbf{#1.} }{\ \rule{0.5em}{0.5em}\vskip 12pt}
\begin{document}

\begin{frontmatter}

\title{A note on the restricted arc connectivity of oriented graphs of girth four}


\author[mymainaddress]{D. Gonz\'alez-Moreno\fnref{myfootnote}}\fntext[myfootnote]{This research was supported by CONACyT-M\'exico, under project CB-222104.}
\ead{dgonzalez@correo.cua.uam.mx} \author[mysecondaryaddress]{R. Hern\'andez Ortiz}

\ead{rangel@ciencias.unam.mx}



\address[mymainaddress]{Departamento de Matem\'aticas Aplicadas y Sistemas\\ Universidad Aut\'onoma Metropolitana UAM-Cuajimalpa\\ Ciudad de M\'exico, M\'exico}
\address[mysecondaryaddress]{Facultad de Ciencias\\ Universidad Nacional Aut\'onoma de M\'exico\\Ciudad de M\'exico, M\'exico}

\begin{abstract}
Let $D$ be a strongly connected digraph.
 An arc set $S$ of $D$ is a \emph{restricted arc-cut} of $D$ if $D-S$ has a non-trivial strong component $D_{1}$ such that $D-V(D_{1})$ contains an arc. 
The \emph{restricted arc-connectivity} $\lambda'(D)$ of a digraph $D$ is the minimum cardinality over all restricted arc-cuts of $D$. 
A strongly connected digraph $D$ is \emph{$\lambda'$-connected} when $\lambda'(D)$ exists.
This paper presents a family $\cal{F}$ of strong digraphs of girth four that are not $\lambda'$-connected and for every strong digraph $D\notin \cal{F}$ with girth four it follows that it is $\lambda'$-connected. Also, an upper and lower bound for $\lambda'(D)$ are given.
\end{abstract}

\begin{keyword}
 Strong connectivity \sep restricted arc-connectivity\sep girth
\MSC[2010] 05C40\sep  05C20
\end{keyword}

\end{frontmatter}

\section{Terminology and introduction}

All the digraphs considered in this work are finite oriented graphs, that is, a digraph with no symmetric arcs or loops. 
Let $D$ be a digraph with  vertex set $V(D)$ and  arc set  $A(D)$.  If $v$ is a vertex of  $D$, the  sets of \emph{out-neighbors} and \emph{in-neighbors} of $v$ are denoted by $N^+(v)$ and $N^-(v)$, respectively.
If $(u,v)$ is an arc of $D$, then it is said that $u$  \emph{dominates} $v$ (or $v$ is \emph{dominated by} $u$) and denote it by $u\rightarrow v$.   Two  vertices $u$ and $v$ of a digraph are   \emph{adjacent} if 
$u\rightarrow v$ or $v\rightarrow u$.
The numbers $d^+(v)= |N^+(v)|$  and $d^-(u) = |N^-(u)|$ are the \emph{out-degree} and the \emph{in-degree} of the vertex $v$.
 A \emph{$p$-cycle} is a cycle of length $p$. The minimum integer $p$ for which $D$ has a $p$-cycle is the \emph{girth of D}; denoted by $g(D)$. 
Given a digraph $D$, the subdigraph of $D$ induced by a set of vertices
$X$ is denoted by $D[X]$. For any subset $S\subset A(D)$,  $D-S$ denotes the subdigraph obtained by
deleting all the arcs of $S$.  
 A digraph $D$ is  \emph{strongly connected} or simply \emph{strong} if for every pair $u, v$ of vertices
there exists a directed path from $u$ to $v$ in $D$. A \emph{strong component} of a digraph $D$ is a maximal induced subdigraph of $D$ which is strong.
A digraph $D$ is called \emph{$k$-arc-connected}  if for any set $S$ of at most $k-1$ arcs the subdigraph $D-S$ is strong. The \emph{arc-connectivity $\lambda(D)$} of a digraph $D$ is  the largest value of $k$ such that $D$ is $k$-arc-connected.
For a pair $X,Y$ of vertex sets of a digraph $D$, we define $(X,Y)=\{x\rightarrow y\in A(D): x\in X,y\in Y\}$. If $Y=X^c$ we write $\partial^+(X)$  or $\partial^-(Y)$.
Let $D$ be a digraph with girth $g$. If $C=(v_1,v_2,\dots ,v_g)$  is  a $g$-cycle of $D$, then let 
$$
\xi(C)=\min \left\{\sum_{i=1}^g d^+(v_i)-g,  \sum_{i=1}^g d^-(v_i)-g\right\}
$$
and 
$$
\xi(D)=\min\{\xi(C) : \mbox{$C$ is a $g$-cycle of $D$}\}.
$$

We follow the book of Bang-Jensen and Gutin \cite{BaGu09} for terminology and definitions not given here.

As is well known, a digraph is a mathematical object modeling networks.  
An important parameter in the study of networks is the fault tolerance:  it is desirable that if some nodes  (resp., links) are unable to work, the message can still be always transmitted.
There are measures that indicate the fault tolerance of a network (modeled  by a digraph $D$), for instance the arc-connectivity  of $D$ measure how easily and reliably a packet sent by a vertex can reach another vertex. 	
Since digraphs with the same arc-connectivity can have large differences in the fault tolerance of the corresponding networks, one might be  interested  in defining more refined reliability parameters in order to provide a more accurate measure of fault tolerance in networks than the arc-connectivity (see \cite{EH1989}). In this context, Volkmann \cite{Vo2007} introduced the concept of restricted arc-connectivity of a digraph, which is closely related to the similar concept of restricted edge-connectivity in graphs proposed by Esfahanian and Hakimi \cite{EH88}.

\begin{definition}[Volkmann \cite{Vo2007}]
Let $D$ be a strongly connected digraph. An arc set $S$ of $D$ is a \emph{restricted arc-cut} of $D$, if $D-S$ has a non-trivial strong component $D_1$ such that $D-V(D_1)$ contains an arc. The \emph{restricted arc-connectivity} $\lambda'(D)$ of $D$ is the miminum cardianlity over all restricted arc-cuts. A strongly connected digraph $D$ is said  to be \emph{$\lambda'$-connected} if $\lambda'(D)$ exists.
\end{definition}
Observe that $\lambda'(D)$ does nots exists for every digraph with less than $g(D)+2$ vertices. Volkmann \cite{Vo2007}  
proved that each strong digraph $D$ of order
$n\ge 4$ and girth $g(D) = 2$ or $g(D) = 3$ except some families of digraphs
is $\lambda'$-conncected and  satisfies $\lambda(D)\le \lambda'(D)\le \xi(D)$.
Moreover, he proved the following characterization.
\begin{theorem}\label{characterization}\cite{Vo2007}
A strongly digraph $D$ is $\lambda'$-connected if and only if $D$ has a girth $C$ such that $D-V(C)$ contains an arc.
\end{theorem}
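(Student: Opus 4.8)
The plan is to prove the two implications separately, exploiting the fact that a restricted arc-cut exists precisely when some non-trivial strong subdigraph can be isolated by deleting all of its out-going arcs.

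For the easy direction, suppose $D$ has a girth cycle $C$ with $D-V(C)$ containing an arc. I would take $S=\partial^+(V(C))$, the set of arcs leaving $V(C)$. Since no arc of $D-S$ leaves $V(C)$, no vertex outside $V(C)$ is reachable from $V(C)$ in $D-S$, so the strong component $D_1$ of $D-S$ containing $V(C)$ is exactly $(D-S)[V(C)]$; it is non-trivial because it still contains the cycle $C$. As $D-V(D_1)=D-V(C)$ contains an arc by hypothesis, $S$ is a restricted arc-cut and hence $\lambda'(D)$ exists.

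For the converse, assume $D$ is $\lambda'$-connected and fix a restricted arc-cut $S$ with non-trivial strong component $D_1$ of $D-S$ such that $D-V(D_1)$ contains an arc. Any cycle inside $D_1$ has its vertices in $V(D_1)$, so the arc of $D-V(D_1)$ survives after deleting those vertices; thus the family of \emph{witness} cycles $C_0$ with $D-V(C_0)$ containing an arc is non-empty. I would choose a witness $C_0$ of minimum length $\ell$, and let $u\to w$ be an arc of $D-V(C_0)$, so $u,w\notin V(C_0)$. A chord of $C_0$ (no backward chord can occur in an oriented graph) would produce a strictly shorter cycle on a subset of $V(C_0)$, still a witness, contradicting minimality; hence $C_0$ is chordless and $D[V(C_0)]=C_0$. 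It remains to prove $\ell=g$.

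Suppose $\ell>g$ and let $\Gamma$ be a girth cycle of $D$. By minimality of $\ell$, $\Gamma$ is not a witness, so $V(\Gamma)$ meets every arc of $D$; in particular $A:=V(\Gamma)\cap V(C_0)$ is a vertex cover of the cycle $C_0$, and $V(\Gamma)$ must meet $\{u,w\}$ outside $V(C_0)$. Writing $a=|A|$ and $b=|V(\Gamma)\setminus V(C_0)|\ge 1$ with $a+b=g$, two counts collide. Since $C_0$ is chordless, every arc of $\Gamma$ with both ends in $A$ is an arc of $C_0$, and the number of arcs of $C_0$ with both ends in $A$ is exactly $2a-\ell$, because the $\ell-a$ vertices not in $A$ form an independent set on $C_0$. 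On the other hand, the $b$ vertices of $\Gamma$ outside $V(C_0)$ are incident with at most $2b$ arcs of $\Gamma$, so $\Gamma$ has at least $g-2b=2a-g$ arcs with both ends in $A$. Combining the two bounds gives $2a-g\le 2a-\ell$, that is $\ell\le g$, contradicting $\ell>g$. Therefore $\ell=g$, and $C_0$ is a girth cycle with $D-V(C_0)$ containing an arc.

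I expect the main obstacle to be exactly this last step: turning an arbitrary witness cycle into a \emph{girth} witness cycle. The chordlessness of a shortest witness, together with the double count of the arcs of $\Gamma$ lying inside the vertex cover $A$, is what makes the obstruction quantitative and forces $\ell=g$; the rest of the argument is routine.
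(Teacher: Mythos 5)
Your proof is correct. Note, however, that this paper does not prove Theorem \ref{characterization} at all: it is stated as a quoted result of Volkmann \cite{Vo2007}, so there is no in-paper proof to compare against, and your argument stands as a self-contained proof of the cited characterization. The forward direction (take $S=\partial^+(V(C))$ and observe that the strong component of $D-S$ containing the surviving cycle $C$ has vertex set exactly $V(C)$) is the standard one. The substance is the converse, and your counting argument is sound: a shortest witness cycle $C_0$ of length $\ell$ must be induced, since any chord closes a strictly shorter cycle on a subset of $V(C_0)$ which still avoids the arc $u\rightarrow w$; and if a girth cycle $\Gamma$ were not itself a witness, then $V(\Gamma)$ would meet every arc of $D$, so $A=V(\Gamma)\cap V(C_0)$ is a vertex cover of $C_0$, whence $C_0$ has exactly $2a-\ell$ arcs with both ends in $A$, while $\Gamma$ has at least $g-2b=2a-g$ such arcs, each of which is an arc of $C_0$ because $D[V(C_0)]=C_0$; combining gives $\ell\le g$, contradicting $\ell>g$. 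One parenthetical remark is wrong, though harmlessly so: backward chords certainly can occur in an oriented graph (for instance $v_{i+2}\rightarrow v_i$); what cannot occur is an arc anti-parallel to an arc of the cycle. Your minimality argument does not depend on that remark, since any chord, forward or backward, produces the shorter witness cycle needed for the contradiction.
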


Concerning the arc-restricted connectivity of digraphs, Meierling, Volkmann and Winzen \cite{MVW08} studied the restricted arc-connectivity of generalizations of tournaments.
Balbuena, Garc\'ia-V\'azquez, Hansberg and Montejano \cite{BaGHM12,BGHM12} studied the restricted arc connectivity for some families of digraphs and introduced the concept of super-$\lambda'$ digraphs.
Results on restricted arc-connectivity of digraphs can be found in, e. g. Balbuena and Garc\'ia-V\'azquez \cite{BG10}, Chen, Liu and Meng \cite{CLM09},  Gr\"uter,  Guo and Holtkamp \cite{GGH13}, Gr\'uter,  Guo,  Holtkamp and Ulmer \cite{GGHU13} and Wang and Lin \cite{SS08}.



%

\vspace{6cm}
%

In this paper we present a family $\cal{F}$ of strong digraphs of girth four that are not $\lambda'$-connected and for every strong digraph $D\notin \cal{F}$ with girth four it follows that it is $\lambda'$-connected and  $\lambda(D)\le \lambda'(D)\le \xi(D)$.

\section{Main result}

Let $D$ be a strong digraph of girth $4$. In this section it is proved that $D$ is $\lambda'$-connected
with exception of the case that $D$ is a member of the following seven families (see Figure 1).

 Let $H_{1}$ consists of the $4$-cycle $(u,v,w,z,u)$ and the following vertex sets 
 $A=\{a_1,a_{2},\dots ,a_{p} \}$, $B=\{b_{1},b_{2},\dots , b_{q}\}$, 
$C=\{c_{1},c_{2},\dots , c_{r}\}$ and $D=\{d_{1},d_{2},\dots , d_{s}\}$  such that 
$u\rightarrow a_{i}\rightarrow v$ for  $i\le i\le p$,  
$v\rightarrow b_{i}\rightarrow w$ for  $i\le i\le q$, $w\rightarrow c_{i}\rightarrow z$ for  $i\le i\le r$ and 
$z\rightarrow d_{i}\rightarrow u$ for  $i\le i\le s$. The cases that $A$, $B$, $C$ or $D$ are empty sets are also allowed.

 Let $H_2$ consists of the  $4$-cycles $(u,v,w,z,u)$ and $(u,v,w,x,u)$,  and the vertex sets $A=\{a_1,a_2,\dots , a_p\}$ and $B=\{b_1,b_2,\dots , b_q\}$ such that $w\rightarrow a_i \rightarrow u$ for $1\le i \le p$ and  $u \rightarrow b_i \rightarrow w $ for $1\le i \le q$. The cases that $A$, $B$ or $C$  are empty sets are also allowed.

 Let $H_3$ consists of  the $4$-cycles $(u,v,w,z,u)$ and $(u,v,w,x,u)$ and the vertex sets 
$A=\{a_1,a_2,\dots , a_p\}$,  $B=\{b_1,b_2,\dots , b_q\}$ and $C=\{c_1,c_,\dots , c_r\}$ such that $u\rightarrow a_i \rightarrow v$, for $1\le i \le p$, $v\rightarrow b_i \rightarrow w$ for $1\le i\le q$  and $w \rightarrow c_i \rightarrow u$ for $1\le i \le r$. The cases that $A$, $B$ or $C$  are empty sets are also allowed.

 Let $H_4$ consists of the $4$-cycles $(u,v,w,z,u)$ and $(u,v,w,x,u)$, a vertex $y$ such  that $u\rightarrow y \rightarrow w$ and $y$ is adjacent to $v$, and the vertex set
$A=\{a_1,a_2,\dots, a_p\}$  such that $w\rightarrow a_i\rightarrow u$ for $1\le i\le p$.
The case that $A$ is an empty set is  also admissible.

 Let $H_5$ consists of the $4$-cycles $(u,v,w,z,u)$ and $(u,v,w,x,u)$ such that $x$ is adjacent to $z$, and the vertex set $A=\{a_1,a_2,\dots ,a_p\}$ such that  $u\rightarrow a_i\rightarrow w$ for $1\le i \le p$.

  Let $H_6$ consists of the $4$-cycles $(u,v,w,z,u)$ and $(u,v,w,x,u)$ such that $x$ is adjacent to $z$,  and the vertex sets $A=\{a_1,a_2,\dots ,a_p\}$,  and $B=\{b_1,b_2,\dots , b_q\}$, $u\rightarrow a_i\rightarrow v$ for $1\le i \le p$ and $v\rightarrow b_i \rightarrow w$ for $1\le i \le q$. The cases that $A$ and $B$  are empty sets are also allowed.

Let $H_7$ consists of the $4$-cycles $(u,v,w,z,u)$ and $(u,v,w,x,u)$ such that $x$ is adjacent to $z$,  and a vertex $y$ adjacent to $v$ such that $u\rightarrow w$.

\begin{figure}[htp]\label{Families}
\begin{center}
\begin{tikzpicture}[scale=.7]
\clip(-6.077877686265985,-8.01817197406777) rectangle (10.53175204462694,7.927072567589536);
\draw [->,line width=1.2pt] (-3.4,4.) -- (-1.4,4.);
\draw [->,line width=1.2pt] (-1.4,4.) -- (0.6,4.);
\draw [->,line width=1.2pt] (0.6,4.) -- (2.6,4.);
\draw [rotate around={0.:(-0.295,5.52)},line width=1.2pt] (-0.295,5.52) ellipse (0.7252051092762395cm and 0.5002973620961438cm);
\draw [->,line width=1.2pt] (-1.019911939132348,5.505775770292485) -- (-3.4,4.);
\draw [->,line width=1.2pt] (0.43010054470410974,5.5284955091154915) -- (2.6,4.);
\draw [rotate around={0.:(-2.395,3.08)},line width=1.2pt] (-2.395,3.08) ellipse (0.7252051092762344cm and 0.5002973620961405cm);
\draw [rotate around={0.:(-0.355,3.08)},line width=1.2pt] (-0.355,3.08) ellipse (0.7252051092762346cm and 0.5002973620961405cm);
\draw [rotate around={0.:(1.565,3.1)},line width=1.2pt] (1.565,3.1) ellipse (0.7252051092762344cm and 0.5002973620961405cm);
\draw [->,line width=1.2pt] (-3.4,4.) -- (-2.9585802036617617,3.3948558660901504);
\draw [->,line width=1.2pt] (-1.8351682300997105,3.3980225398781347) -- (-1.4,4.);
\draw [->,line width=1.2pt] (-1.4,4.) -- (-0.9671882199509313,3.348206073985828);
\draw [->,line width=1.2pt] (0.22243028867528225,3.3826767178973807) -- (0.6,4.);
\draw [->,line width=1.2pt] (0.6,4.) -- (0.979735931471287,3.3954291282536526);
\draw [->,line width=1.2pt] (2.1825575570590514,3.3622823499842127) -- (2.6,4.);
\draw (-0.645,5.8) node[anchor=north west] {$D$};
\draw (-2.74,3.42) node[anchor=north west] {$A$};
\draw (-0.645,3.42) node[anchor=north west] {$B$};
\draw (1.38,3.42) node[anchor=north west] {$C$};
\draw (-0.6441559600167281,7.713520185335197) node[anchor=north west] {$H_1$};
\draw [shift={(-0.35819667527799626,3.992017297540155)},line width=1.2pt]  plot[domain=0.0026984964016016767:2.9138217335434025,variable=\t]({1.*2.9582074459314636*cos(\t r)+0.*2.9582074459314636*sin(\t r)},{0.*2.9582074459314636*cos(\t r)+1.*2.9582074459314636*sin(\t r)});
\draw [->,line width=1.2pt] (-3.24,4.66) -- (-3.4,4.);
\draw [->,line width=1.2pt] (3.8947899343412793,5.2012017362783505) -- (5.894789934341279,5.2012017362783505);
\draw [->,line width=1.2pt] (5.894789934341279,5.2012017362783505) -- (7.894789934341279,5.2012017362783505);
\draw [->,line width=1.2pt] (7.894789934341279,5.2012017362783505) -- (7.894789934341279,7.201201736278353);
\draw [->,line width=1.2pt] (7.894789934341279,7.201201736278353) -- (3.8947899343412793,5.2012017362783505);
\draw [->,line width=1.2pt] (3.8947899343412793,7.201201736278353) -- (7.894789934341279,5.2012017362783505);
\draw [->,line width=1.2pt] (3.8947899343412793,7.201201736278353) -- (3.8947899343412793,5.2012017362783505);
\draw [rotate around={0.:(5.7797899343412755,4.101201736278349)},line width=1.2pt] (5.7797899343412755,4.101201736278349) ellipse (0.7252051092762319cm and 0.5002973620961403cm);
\draw [rotate around={0.:(5.845940977814994,2.881238171120028)},line width=1.2pt] (5.845940977814994,2.881238171120028) ellipse (0.6856111324698931cm and 0.44095081921530155cm);
\draw (5.45,4.4) node[anchor=north west] {$A$};
\draw (5.45,3.2) node[anchor=north west] {$B$};
\draw [->,line width=1.2pt] (7.894789934341279,5.2012017362783505) -- (6.4969722284771185,4.1754133219181755);
\draw [->,line width=1.2pt] (5.080922285378787,4.234806023458734) -- (3.8947899343412793,5.2012017362783505);
\draw [->,line width=1.2pt] (3.8947899343412793,5.2012017362783505) -- (5.285398970749378,3.135144225934499);
\draw [->,line width=1.2pt] (6.411434508709915,3.1305612265282723) -- (7.894789934341279,5.2012017362783505);
\draw (5.691231380195288,7.879616482644128) node[anchor=north west] {$H_2$};
\draw [->,line width=1.2pt] (3.44282541052074,-0.2468986838154264) -- (5.442825410520739,-0.2468986838154264);
\draw [->,line width=1.2pt] (5.442825410520739,-0.2468986838154264) -- (7.442825410520739,-0.2468986838154264);
\draw [->,line width=1.2pt] (7.442825410520739,-0.2468986838154264) -- (7.442825410520739,1.7531013161845734);
\draw [->,line width=1.2pt] (7.442825410520739,1.7531013161845734) -- (3.44282541052074,-0.24689868381542635);
\draw [->,line width=1.2pt] (3.44282541052074,1.7531013161845734) -- (7.442825410520739,-0.24689868381542635);
\draw [->,line width=1.2pt] (3.44282541052074,1.7531013161845734) -- (3.44282541052074,-0.24689868381542635);
\draw [rotate around={0.:(5.447825410520738,-2.606898683815425)},line width=1.2pt] (5.447825410520738,-2.606898683815425) ellipse (0.7252051092762395cm and 0.5002973620961437cm);
\draw (5.12,-2.2) node[anchor=north west] {$A$};
\draw [->,line width=1.2pt] (3.44282541052074,-0.2468986838154264) -- (5.442825410520739,-1.646898683815426);
\draw [->,line width=1.2pt] (5.442825410520739,-1.646898683815426) -- (7.442825410520739,-0.24689868381542635);
\draw [line width=1.2pt,dash pattern=on 3pt off 3pt] (5.442825410520739,-0.2468986838154264)-- (5.442825410520739,-1.646898683815426);
\draw [->,line width=1.2pt] (-2.8325203285514284,-0.08935862342030845) -- (-0.45252032855143165,-0.14935862342030803);
\draw [->,line width=1.2pt] (-0.45252032855143165,-0.14935862342030803) -- (1.5474796714485688,-0.14935862342030803);
\draw [->,line width=1.2pt] (1.5474796714485688,-0.14935862342030803) -- (1.5474796714485688,1.8506413765796905);
\draw [->,line width=1.2pt] (1.5474796714485688,1.8506413765796907) -- (-2.8325203285514284,-0.08935862342030854);
\draw [->,line width=1.2pt] (-2.852520328551428,1.8106413765796916) -- (1.5474796714485688,-0.14935862342030792);
\draw [->,line width=1.2pt] (-2.852520328551428,1.8106413765796916) -- (-2.8325203285514284,-0.08935862342030854);
\draw [rotate around={0.:(0.48747967144856985,-1.1493586234203084)},line width=1.2pt] (0.48747967144856985,-1.1493586234203084) ellipse (0.6406632924203222cm and 0.4837865792421787cm);
\draw [rotate around={0.:(-1.3525203285514291,-1.1493586234203084)},line width=1.2pt] (-1.3525203285514291,-1.1493586234203084) ellipse (0.6406632924203232cm and 0.48378657924217816cm);
\draw [->,line width=1.2pt] (-2.8325203285514284,-0.08935862342030845) -- (-1.9691470546310228,-1.0180848639045117);
\draw [->,line width=1.2pt] (-0.7259745553684048,-1.0483572440818878) -- (-0.45252032855143165,-0.14935862342030803);
\draw [->,line width=1.2pt] (-0.45252032855143165,-0.14935862342030803) -- (-0.10633493250646653,-0.9677592470195437);
\draw [->,line width=1.2pt] (1.0476228385126682,-0.9145507236587856) -- (1.5474796714485688,-0.14935862342030803);
\draw (-0.6204279175440239,2.0899741193042725) node[anchor=north west] {$H_3$};
\draw [rotate around={0.:(-0.38752032855143065,-2.4293586234203075)},line width=1.2pt] (-0.38752032855143065,-2.4293586234203075) ellipse (0.6856111324698888cm and 0.44095081921529744cm);
\draw [shift={(0.29856471543683255,-0.9784202069980171)},line width=1.2pt]  plot[domain=-1.3768498625996122:0.5860330554556596,variable=\t]({1.*1.49904358732966*cos(\t r)+0.*1.49904358732966*sin(\t r)},{0.*1.49904358732966*cos(\t r)+1.*1.49904358732966*sin(\t r)});
\draw [shift={(-0.761562797483743,-0.35559064168643023)},line width=1.2pt]  plot[domain=3.157437716288981:4.566689538631108,variable=\t]({1.*2.131225064558987*cos(\t r)+0.*2.131225064558987*sin(\t r)},{0.*2.131225064558987*cos(\t r)+1.*2.131225064558987*sin(\t r)});
\draw [->,line width=1.2pt] (-2.892520328551427,-0.38935862342030836) -- (-2.8325203285514284,-0.08935862342030843);
\draw [->,line width=1.2pt] (0.5874796714485675,-2.449358623420307) -- (0.30747967144856636,-2.4693586234203067);
\draw (-1.65,-0.7765) node[anchor=north west] {$A$};
\draw (0.20,-0.7765) node[anchor=north west] {$B$};
\draw (-0.66,-2.05) node[anchor=north west] {$C$};
\draw [->,line width=1.2pt] (-5.326904618140781,-5.655774090714479) -- (-3.3269046181407798,-5.655774090714479);
\draw [->,line width=1.2pt] (-3.3269046181407798,-5.655774090714479) -- (-1.3269046181407815,-5.655774090714479);
\draw [->,line width=1.2pt] (-1.3269046181407815,-5.655774090714479) -- (-1.3269046181407815,-3.6557740907144787);
\draw [->,line width=1.2pt] (-1.3269046181407815,-3.6557740907144787) -- (-5.326904618140781,-5.655774090714479);
\draw [->,line width=1.2pt] (-5.326904618140781,-3.6557740907144787) -- (-1.3269046181407815,-5.655774090714479);
\draw [->,line width=1.2pt] (-5.326904618140781,-3.6557740907144787) -- (-5.326904618140781,-5.655774090714479);
\draw [rotate around={0.:(-3.4158294509854983,-6.73313851300022)},line width=1.2pt] (-3.4158294509854983,-6.73313851300022) ellipse (0.8509448065510048cm and 0.6407082516997279cm);
\draw (-3.66,-6.5) node[anchor=north west] {$A$};
\draw [->,line width=1.2pt] (-5.326904618140781,-5.655774090714479) -- (-4.2393013840170815,-6.571649586643957);
\draw [->,line width=1.2pt] (-2.565340909892856,-6.712159976323876) -- (-1.3269046181407815,-5.655774090714479);
\draw [line width=1.2pt,dash pattern=on 6pt off 6pt] (-5.326904618140781,-3.6557740907144787)-- (-1.3269046181407815,-3.6557740907144787);
\draw [->,line width=1.2pt] (0.6730953818592189,-5.655774090714479) -- (2.6730953818592225,-5.655774090714479);
\draw [->,line width=1.2pt] (2.6730953818592225,-5.655774090714479) -- (4.6730953818592225,-5.655774090714479);
\draw [->,line width=1.2pt] (4.6730953818592225,-5.655774090714479) -- (4.6730953818592225,-3.6557740907144787);
\draw [->,line width=1.2pt] (4.6730953818592225,-3.6557740907144787) -- (0.6730953818592189,-5.655774090714479);
\draw [->,line width=1.2pt] (0.6730953818592189,-3.6557740907144787) -- (4.6730953818592225,-5.655774090714479);
\draw [->,line width=1.2pt] (0.6730953818592189,-3.6557740907144787) -- (0.6730953818592189,-5.655774090714479);
\draw [rotate around={0.:(1.5780953818592194,-6.735774090714482)},line width=1.2pt] (1.5780953818592194,-6.735774090714482) ellipse (0.7252051092762395cm and 0.5002973620961473cm);
\draw (1.3,-6.4) node[anchor=north west] {$A$};
\draw (3.4,-6.4) node[anchor=north west] {$B$};
\draw [->,line width=1.2pt] (0.6730953818592189,-5.655774090714479) -- (1.2308162848269943,-6.29657017540478);
\draw [rotate around={0.:(3.5780953818592254,-6.715774090714483)},line width=1.2pt] (3.5780953818592254,-6.715774090714483) ellipse (0.7252051092762197cm and 0.5002973620961328cm);
\draw [->,line width=1.2pt] (2.094831539522393,-6.384748662359989) -- (2.6730953818592225,-5.655774090714479);
\draw [->,line width=1.2pt] (2.6730953818592225,-5.655774090714479) -- (2.942392881222606,-6.47500566902618);
\draw [->,line width=1.2pt] (4.213900086633112,-6.475134140995005) -- (4.6730953818592225,-5.655774090714479);
\draw [line width=1.2pt,dash pattern=on 6pt off 6pt] (0.6730953818592189,-3.6557740907144787)-- (4.6730953818592225,-3.6557740907144787);
\draw [->,line width=1.2pt] (6.015979730307655,-5.62951741398196) -- (8.015979730307663,-5.62951741398196);
\draw [->,line width=1.2pt] (8.015979730307663,-5.62951741398196) -- (10.01597973030766,-5.62951741398196);
\draw [->,line width=1.2pt] (10.01597973030766,-5.62951741398196) -- (10.01597973030766,-3.6295174139819597);
\draw [->,line width=1.2pt] (10.01597973030766,-3.6295174139819597) -- (6.015979730307655,-5.62951741398196);
\draw [->,line width=1.2pt] (6.015979730307655,-3.6295174139819597) -- (10.01597973030766,-5.62951741398196);
\draw [->,line width=1.2pt] (6.015979730307655,-3.6295174139819597) -- (6.015979730307655,-5.62951741398196);
\draw [line width=1.2pt,dash pattern=on 6pt off 6pt] (6.015979730307655,-3.6295174139819597)-- (10.01597973030766,-3.6295174139819597);
\draw [->,line width=1.2pt] (6.015979730307655,-5.62951741398196) -- (7.9928515000475935,-7.021594190214189);
\draw [->,line width=1.2pt] (7.9928515000475935,-7.021594190214189) -- (10.01597973030766,-5.62951741398196);
\draw (5.359038785577429,1.995061949413455) node[anchor=north west] {$H_4$};
\draw [->,line width=1.2pt] (7.442825410520739,-0.2468986838154264) -- (6.024493886309811,-2.3035315322771286);
\draw [->,line width=1.2pt] (4.814662746012289,-2.3629659317859946) -- (3.44282541052074,-0.24689868381542635);
\draw (-3.6813453965228633,-2.6319063327638883) node[anchor=north west] {$H_5$};
\draw (2.5116736888529276,-2.6319063327638883) node[anchor=north west] {$H_6$};
\draw (7.92166737262948,-2.821730672545523) node[anchor=north west] {$H_7$};
\draw [line width=1.2pt,dash pattern=on 6pt off 6pt] (8.015979730307663,-5.62951741398196)-- (7.9928515000475935,-7.021594190214189);
\draw [fill=black] (-3.4,4.) circle (2.0pt);
\draw[color=black] (-3.6576173540501586,4.391594239156592) node {$u$};
\draw [fill=black] (-1.4,4.) circle (2.0pt);
\draw[color=black] (-1.3322691917251492,4.439050324102) node {$v$};
\draw [fill=black] (0.6,4.) circle (2.0pt);
\draw[color=black] (0.7557985458728184,4.367866196683887) node {$w$};
\draw [fill=black] (2.6,4.) circle (2.0pt);
\draw[color=black] (2.7726821560526735,4.367866196683887) node {$z$};
\draw [fill=black] (3.8947899343412793,5.2012017362783505) circle (2.0pt);
\draw[color=black] (3.579435600124616,5.5305402778463995) node {$u$};
\draw [fill=black] (5.894789934341279,5.2012017362783505) circle (2.0pt);
\draw[color=black] (6.070880059758554,5.577996362791808) node {$v$};
\draw [fill=black] (7.894789934341279,5.2012017362783505) circle (2.0pt);
\draw[color=black] (8.064035627465707,5.577996362791808) node {$w$};
\draw [fill=black] (7.894789934341279,7.201201736278353) circle (2.0pt);
\draw[color=black] (8.064035627465707,7.571151930498972) node {$z$};
\draw [fill=black] (3.8947899343412793,7.201201736278353) circle (2.0pt);
\draw[color=black] (4.053996449578699,7.571151930498972) node {$x$};
\draw [fill=black] (3.44282541052074,-0.2468986838154264) circle (2.0pt);
\draw[color=black] (3.081146708197828,0.07309050912440467) node {$u$};
\draw [fill=black] (5.442825410520739,-0.2468986838154264) circle (2.0pt);
\draw[color=black] (5.620047252777175,0.14427463654251765) node {$v$};
\draw [fill=black] (7.442825410520739,-0.2468986838154264) circle (2.0pt);
\draw[color=black] (7.613202820484326,0.14427463654251765) node {$w$};
\draw [fill=black] (7.442825410520739,1.7531013161845734) circle (2.0pt);
\draw[color=black] (7.613202820484326,2.137430204249681) node {$z$};
\draw [fill=black] (3.44282541052074,1.7531013161845734) circle (2.0pt);
\draw[color=black] (3.60316364259732,2.137430204249681) node {$x$};
\draw [fill=black] (5.442825410520739,-1.646898683815426) circle (2.0pt);
\draw[color=black] (5.8,-1.8) node {$y$};
\draw [fill=black] (-2.8325203285514284,-0.08935862342030845) circle (2.0pt);
\draw[color=black] (-3.2305125895414837,0.3103709338514479) node {$u$};
\draw [fill=black] (-0.45252032855143165,-0.14935862342030803) circle (2.0pt);
\draw[color=black] (-0.2882353229261655,0.23918680643333495) node {$v$};
\draw [fill=black] (1.5474796714485688,-0.14935862342030803) circle (2.0pt);
\draw[color=black] (1.7049202447809857,0.23918680643333495) node {$w$};
\draw [fill=black] (1.5474796714485688,1.8506413765796907) circle (2.0pt);
\draw[color=black] (1.7049202447809857,2.232342374140498) node {$z$};
\draw [fill=black] (-2.852520328551428,1.8106413765796916) circle (2.0pt);
\draw[color=black] (-2.6847676126692877,2.1848862891950898) node {$x$};
\draw [fill=black] (-5.326904618140781,-5.655774090714479) circle (2.0pt);
\draw[color=black] (-5.579588794339198,-5.313175132179477) node {$u$};
\draw [fill=black] (-3.3269046181407798,-5.655774090714479) circle (2.0pt);
\draw[color=black] (-3.159328462123371,-5.265719047234068) node {$v$};
\draw [fill=black] (-1.3269046181407815,-5.655774090714479) circle (2.0pt);
\draw[color=black] (-1.1661728944162202,-5.265719047234068) node {$w$};
\draw [fill=black] (-1.3269046181407815,-3.6557740907144787) circle (2.0pt);
\draw[color=black] (-1.1661728944162202,-3.272563479526905) node {$z$};
\draw [fill=black] (-5.326904618140781,-3.6557740907144787) circle (2.0pt);
\draw[color=black] (-5.152484029830522,-3.272563479526905) node {$x$};
\draw [fill=black] (0.6730953818592189,-5.655774090714479) circle (2.0pt);
\draw[color=black] (0.3286937813641432,-5.360631217124886) node {$u$};
\draw [fill=black] (2.6730953818592225,-5.655774090714479) circle (2.0pt);
\draw[color=black] (2.843866283470786,-5.265719047234068) node {$v$};
\draw [fill=black] (4.6730953818592225,-5.655774090714479) circle (2.0pt);
\draw[color=black] (4.837021851177937,-5.265719047234068) node {$w$};
\draw [fill=black] (4.6730953818592225,-3.6557740907144787) circle (2.0pt);
\draw[color=black] (4.837021851177937,-3.272563479526905) node {$z$};
\draw [fill=black] (0.6730953818592189,-3.6557740907144787) circle (2.0pt);
\draw[color=black] (0.8507107157636351,-3.272563479526905) node {$x$};
\draw [fill=black] (6.015979730307655,-5.62951741398196) circle (2.0pt);
\draw[color=black] (5.714959422667992,-5.360631217124886) node {$u$};
\draw [fill=black] (8.015979730307663,-5.62951741398196) circle (2.0pt);
\draw[color=black] (8.182675839829226,-5.241991004761364) node {$v$};
\draw [fill=black] (10.01597973030766,-5.62951741398196) circle (2.0pt);
\draw[color=black] (10.175831407536377,-5.241991004761364) node {$w$};
\draw [fill=black] (10.01597973030766,-3.6295174139819597) circle (2.0pt);
\draw[color=black] (10.175831407536377,-3.248835437054201) node {$z$};
\draw [fill=black] (6.015979730307655,-3.6295174139819597) circle (2.0pt);
\draw[color=black] (6.189520272122075,-3.248835437054201) node {$x$};
\draw [fill=black] (7.9928515000475935,-7.021594190214189) circle (2.0pt);
\draw[color=black] (8.0,-7.3) node {$y$};
\end{tikzpicture}\caption{Families of digraphs that are not $\lambda'$-connected. Dotted line indicates adjacency.}
\end{center}
\end{figure}
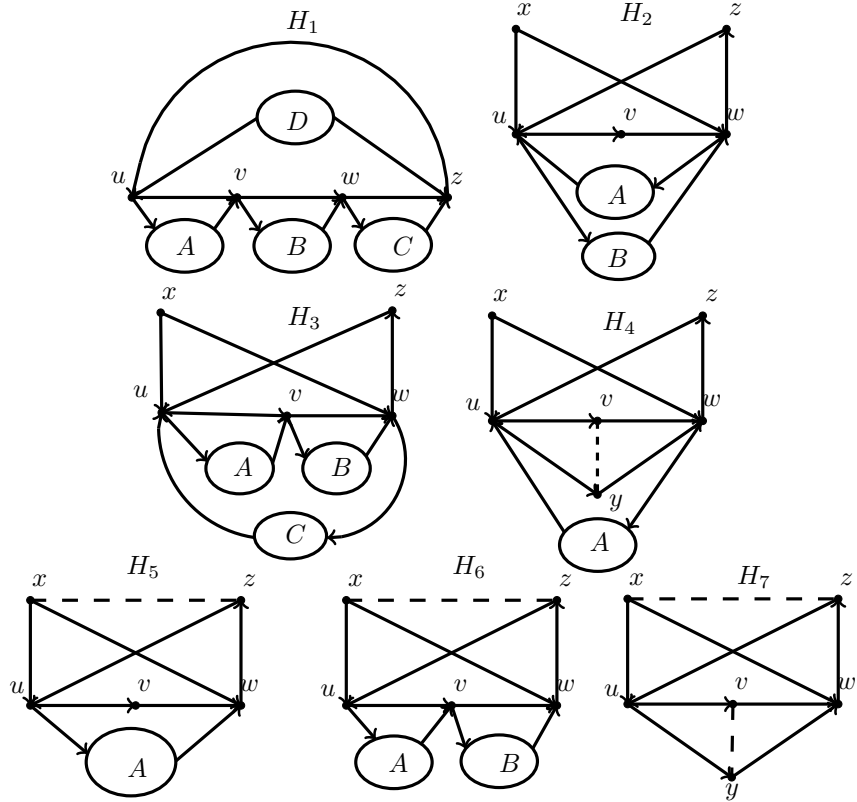

Observe that by Theorem \ref{characterization} it follows that the digraphs of $H_1,H_2,\dots ,H_7$ are not $\lambda'$-connected. 

\begin{theorem}
Let $D$ be a  strong digraph of girth $4$ and $|V(D)|\ge 6$. If $D$ is not isomorphic to a member of the families $H_1,H_2,\dots ,H_7$, then $D$ is $\lambda'$-connected and 
$$
\lambda(D)\le \lambda'(D)\le \xi(D).
$$
\end{theorem}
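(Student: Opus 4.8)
The plan is to combine Volkmann's characterization (Theorem~\ref{characterization}) with a structural analysis of how the vertices lying outside a fixed girth cycle can attach to it. Throughout, a \emph{girth cycle} means a $4$-cycle, since $g(D)=4$. Two facts are immediate and will be used repeatedly. First, a girth cycle $C=(v_1,v_2,v_3,v_4,v_1)$ has no chords: any chord would close a directed triangle together with two cycle arcs, contradicting $g(D)=4$. Consequently the only arcs of $D[V(C)]$ are the four cycle arcs, so
$$
|\partial^+(V(C))|=\sum_{i=1}^4 d^+(v_i)-4, \qquad |\partial^-(V(C))|=\sum_{i=1}^4 d^-(v_i)-4,
$$
and hence $\min\{|\partial^+(V(C))|,|\partial^-(V(C))|\}=\xi(C)$. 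Second, the lower bound $\lambda(D)\le\lambda'(D)$ is free: any restricted arc-cut $S$ leaves a proper strong component $D_1$ of $D-S$ (proper because $D-V(D_1)$ contains an arc, so $V(D_1)\ne V(D)$), whence $D-S$ is not strong and $|S|\ge\lambda(D)$; minimizing over $S$ gives the claim.

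For the upper bound I would first record the key reduction: if $C$ is a girth cycle with $D-V(C)$ containing an arc, then taking $S$ to be the smaller of $\partial^+(V(C))$ and $\partial^-(V(C))$ produces a restricted arc-cut. Indeed, after deleting $\partial^+(V(C))$ no arc leaves $V(C)$, so $D[V(C)]$ is a non-trivial strong component $D_1$ of $D-S$, and $D-V(D_1)=D-V(C)$ contains an arc by hypothesis; thus $\lambda'(D)\le|S|=\xi(C)$. So the existence of $\lambda'(D)$ reduces to producing any girth cycle satisfying (ii) that $D-V(C)$ contains an arc (this is precisely Theorem~\ref{characterization}), while the sharp bound $\lambda'(D)\le\xi(D)$ reduces to producing one that in addition satisfies (i) $\xi(C)=\xi(D)$.

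The heart of the argument, and the main obstacle, is the structural classification established by contraposition: if $D$ is a strong girth-$4$ digraph with $|V(D)|\ge 6$ for which no girth cycle leaves an arc outside, then $D\cong H_i$ for some $i$. By Theorem~\ref{characterization} this is exactly the statement that a non-$\lambda'$-connected such $D$ lies in the list. I would fix a girth cycle $C=(u,v,w,z,u)$ and use the hypothesis that $D-V(C)$ has no arc: every vertex outside $C$ is then non-adjacent to every other outside vertex, and since $D$ is strong each outside vertex $x$ has both an in-neighbor and an out-neighbor on $C$. The girth-$4$ condition severely restricts their positions: if $v_a\to x\to v_b$ then $a\ne b$ (no digon) and $a\ne b+1$ (else $v_a\to x\to v_b\to v_a$ is a triangle), leaving only the \emph{parallel} type $v_{b-1}\to x\to v_b$ (a length-$2$ detour, as in $H_1$) and the \emph{diagonal} type $v_{b+2}\to x\to v_b$, which creates a second girth cycle through $x$. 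If every outside vertex is of parallel type one obtains $H_1$. Otherwise a diagonal vertex spawns a second $4$-cycle $C'$ sharing the path $u,v,w$ with $C$ (matching the common shape of $H_2,\dots,H_7$); imposing the no-arc-outside hypothesis simultaneously on $C$, on $C'$, and on the further girth cycles they generate forces, after a finite case analysis on where the detour vertices sit and on the adjacencies among $x,z$ and the auxiliary vertex $y$, one of $H_2,\dots,H_7$. Checking that this enumeration is exhaustive and that no surviving configuration has been missed is the delicate and lengthy part.

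Finally, to obtain $\lambda'(D)\le\xi(D)$ rather than merely $\lambda'(D)\le\xi(C)$ for some witness cycle, I would run the dichotomy on a girth cycle $C^{*}$ chosen to minimize $\xi$. If $D-V(C^{*})$ contains an arc we are done by the reduction above. If not, the classification applied with $C^{*}$ in the role of the fixed cycle shows $D\cong H_i$ for some $i$, contradicting $D\notin\{H_1,\dots,H_7\}$. The point needing care here is that the classification nominally uses that \emph{every} girth cycle leaves no arc, whereas the minimization supplies only one such cycle; bridging this gap — showing that a single $\xi$-minimal arc-free girth cycle already propagates the required constraints to all neighbouring girth cycles — is precisely where the $\xi$-minimality of $C^{*}$ must be invoked, and it is the second place I expect friction.
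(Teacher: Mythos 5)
Your preliminary reductions are correct and are exactly what the paper uses implicitly: a girth $4$-cycle is chordless, so $\min\{|\partial^+(V(C))|,|\partial^-(V(C))|\}=\xi(C)$; if moreover $D-V(C)$ contains an arc, the smaller of these two arc sets is a restricted arc-cut, giving $\lambda'(D)\le\xi(C)$; and $\lambda(D)\le\lambda'(D)$ is immediate. The problem is that everything after these reductions is deferred rather than proved, and the deferred part is the theorem. First, the classification ``if no girth cycle of $D$ leaves an arc outside, then $D\cong H_i$'' is only sketched; the exhaustive case analysis you yourself call ``the delicate and lengthy part'' is precisely the content to be supplied, and your parallel/diagonal typology is only its first step. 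Second, and more seriously, your route to the sharp bound $\lambda'(D)\le\xi(D)$ rests on a claim that is false, not merely unproven: that a single $\xi$-minimal arc-free girth cycle $C^{*}$ forces $D\cong H_i$. A counterexample: let $D^{*}$ have vertices $u,v,w,z,x,a$ and arcs $u\rightarrow v\rightarrow w\rightarrow z\rightarrow u$, $w\rightarrow x\rightarrow u$, $x\rightarrow z$, $w\rightarrow a\rightarrow u$. This oriented graph is strong and has girth $4$; its only girth cycles are $(u,v,w,z,u)$, $(u,v,w,x,u)$ and $(u,v,w,a,u)$, all with $\xi=2$, so $C^{*}=(u,v,w,z,u)$ is $\xi$-minimal; and $D^{*}-V(C^{*})$, induced on $\{x,a\}$, has no arc. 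Yet $D^{*}$ is not isomorphic to any $H_i$: members of $H_1,H_2,H_3$ have an even number of arcs while $D^{*}$ has nine, the nine-arc members of $H_4,H_5,H_6$ have maximum in-degree two while $d^-(u)=3$ in $D^{*}$, and members of $H_7$ have ten arcs. In particular no ``propagation'' argument can rescue your step: the neighbouring girth cycle $(u,v,w,a,u)$ of $D^{*}$ is \emph{not} arc-free, so arc-freeness simply does not propagate.

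What closes the gap in the paper is to abandon the classification-first organization and argue directly on the $\xi$-minimal cycle $C=(u,v,w,z,u)$. If $D-V(C)$ has an arc, one is done; otherwise every outside vertex attaches only to $C$, the exclusion of $H_1$ yields a second girth cycle $C'=(u,v,w,x,u)$ sharing two arcs with $C$, and a case analysis on the degrees of $x$, $z$ and $v$ produces, in every case, either a girth cycle with $\xi$ strictly smaller than $\xi(D)$ (contradicting minimality of $C$) or an explicit restricted arc-cut of size at most $\xi(D)$ that is \emph{not} of the form $\partial^{\pm}(V(C))$: sets such as $\partial^+(\{u,a_3,w,z\})$, $\partial^+(\{u,v,w,a_2\})$ or $\{ua_1,vw,wx\}$, whose witness strong component is a different $4$-cycle through $u$ and $w$ and whose stranded arc is $a_1v$, $va_2$ or $xz$. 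For the digraph $D^{*}$ above this is the cut $\partial^+(\{u,v,w,a\})=\{wz,wx\}$ of size $2=\xi(D^{*})$, which strands the arc $x\rightarrow z$. The exclusions of $H_2,\dots,H_7$ enter only pointwise, to guarantee that the auxiliary vertices $a_i$ needed for these constructions exist; no global classification lemma is ever proved or needed. If you retain your two-step plan, you must still reproduce essentially all of this case analysis to handle $\lambda'$-connected digraphs whose $\xi$-minimal cycle is arc-free, so the separation buys nothing and costs you a false intermediate claim.
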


\begin{proof}
First we prove the right hand of the inequality. Let $C=(u,v,w,z,u)$ be a $4$-cycle of $D$ such that $\xi(D)=\xi(C)$. Suppose without loss of generality that  $\xi(C)=d^{+}(u)+d^{+}(v)+d^{+}(w)+d^{+}(z)-4$.  If $D-\{u,v,w,z\}$ contains an arc, then $D$ is $\lambda'$-connected and $\lambda'(D)\le\xi(D)$. Otherwise  $D-\{u,v,w,z\}$ consists of a set of isolated vertices.  Since $D$ is not isomorphic to a member of  $H_{1}$,  $D$ must contains a $4$-cycle $C'$ containing two arcs of $C$. Let $C'=(u,v,w,x,u)$. We continue the proof by distinguishing three cases.

\paragraph{\textbf{{Case 1}}}  Assume that  \emph{$d^{+}(x)=d^{-}(x)=1$}.

\paragraph{\textbf{{Subcase 1.1}}} If $d^{+}(z)=d^{-}(z)=1$. Since $D$ is not isomorphic to any member of $H_{2}$, $H_{3}$ and $H_{4}$, it follows that $|V(D)|\ge 7$ impliying that  there exists a set of vertices 
$a_{1},a_{2},\dots , a_{m}$, $m\ge 2$,  such that $a_{i}\notin \{u,v,w,x,z\}$ for $1\le i\le m$. If $d^{+}(v)=d^{-}(v)=1$. Since $D$ is strong, it follows that $d^{+}(a_{i})=d^{-}(a_i)=1$ for every $1\le i \le m$ impliying that $D$ is isomorphic to a member of $H_{2}$, a contradiction. 
Therefore, either $d^{+}(v)\ge 2$ or  $d^{-}(v)=2$.
If $d^+(v)\ge 2$ and $d^-(v)=1$, then there exists a vertex $a_1$ such that $v \rightarrow a_1$. Since $D$ is strong and has girth 4, it follows that $a_1\rightarrow w$. Moreover, since $D$ is  not a member of the families $H_{3}$ and $H_{4}$,  there exists a vertex $a_2$, $a_2\neq a_1$ such that $u \rightarrow a_2 \rightarrow w$. Also, as $d^{-}(v)=1$, it follows that $d^{+}(a_2)=1$.  Consider the $4$-cycle $C_{1}=(u,a_{2},w,z,u)$, therefore
\begin{eqnarray*}
\xi(C_{1}) & \le & d^{+}(u)+d^{+}(a_{2})+d^{+}(w)+d^{+}(z)-4\\
& < & d^{+}(u)+d^{+}(v)+d^{+}(w)+d^{+}(z)-4 = \xi(D),
\end{eqnarray*}
giving a contradiction.

If $d^+(v) =1$ and $d^-(v)\ge 2$, then there exists a vertex $a_1$ such that $a_1 \rightarrow v$. Further, since $D$ is strong and has girth 4, it follows that  $u\rightarrow a_1$. 
As $D$  is not isomorphic to any member of families $H_3$ and $H_4$,  there exists a vertex $a_2$, $a_2\neq a_1$ such that $u \rightarrow a_2\rightarrow w$. Let $S=\{ua_1,vw,wx\}\subset A(D)$. The digraph $D-S$ has a strong component $D_1$ containing the $4$-cycle $(u,a_2,w,z,u)$ and $D-S$  contains the arc $a_1v$. 
Therefore $D$ is $\lambda'$-connected and 
$$
\lambda'(D)\le |S| \le d^{+}(u)+d^{+}(v)+d^{+}(w)+d^{+}(z)-4=\xi(D).
$$
If $d^+(v)\ge 2$ and $d^-(v)\ge 2$, then there exists two vertices $a_1,a_2$, such that $a_1\rightarrow v$ and $v\rightarrow a_2$. Since $D$ is strong and has girth 4,  $u\rightarrow a_1$ and $a_2\rightarrow w$.  Since $D$  is not isomorphic to any member of  the family  $H_3$, there exists a vertex $a_3$  such that $u\rightarrow a_3\rightarrow w$. 
Let $S=\partial^+(\{u,a_3,w,z\})$, then $S$ is a restricted arc-cut of $D$ and
\begin{eqnarray*}
\lambda'(D)& \le &  |S|\le d^+(u)+d^+(a_3)+d^+(w)+d^+(z)-4\\
& \le & d^+(u)+2+d^+(w)+d^+(z)-4\\
& \le & d^+(u)+d^+(v)+d^+(w)+d^+(z)-4\\
& = & \xi(D),
\end{eqnarray*}
giving a contradiction.

\paragraph{\textbf{{Subcase 1.2}}}  Assume that either $d^+(z)\ge 2$ or $d^-(z)\ge 2$. This implies that there  exists a vertex $a$, different from $u,w,x$. Suppose that $z \rightarrow a$. Therefore
\begin{eqnarray*}
\xi((u,v,w,x,u)) & \le & d^+(u)+d^+(v)+d^+(w)+d^+(x)-4\\
& < &  d^+(u)+d^+(v)+d^+(w)+2-4=\xi(D).\\
\end{eqnarray*}

Continue assuming that $a\rightarrow z$. Let  $S=\partial^+(\{u,v,w,x\})$. Note that $D-S$  has a  strong component $D_1$ containing
the $4$-cycle $(u,v,w,x,u)$ and $D-V(D_1)$ contains the arc $az$. Hence $S$ is a $\lambda'$-restricted arc cut and 
\begin{eqnarray*}
\lambda'(D) \le |S| & \le & d^+(u)+d^+(v)+d^+(w)+d^+(x)-4\\
& = &  d^+(u)+d^+(v)+d^+(w)+1-4\\
& \le &  d^+(u)+d^+(v)+d^+(w)+d^+(z)-4=\xi (D),
\end{eqnarray*}
 and the result follows.
 
\paragraph{\textbf{{Case 2}}} Assume that $d^+(x)=1$ and $d^-(x)=2$. This implies that $z\rightarrow x$ and therefore $d^+(z)\ge 2$. Since $(u,v,w,x,u)$ is a $4$-cycle,  it follows that 
\begin{eqnarray*}
\xi((u,v,w,x,u)) & \le & d^+(u)+d^+(v)+d^+(w)+d^+(x)-4\\
& < &  d^+(u)+d^+(v)+d^+(w)+2-4\\
& \le &  d^+(u)+d^+(v)+d^+(w)+d^+(z)-4=\xi(D),\\
\end{eqnarray*}
 yielding a contradiction. 
 
 \paragraph{\textbf{{Case 3}}} Assume that $d^+(x)=2$ and $d^-(x)=1$.  This implies that $x\rightarrow z$.
 
 \paragraph{\textbf{{Subcase 3.1}}} If $d^+(z)=1$ and $d^-(z)=2$. 
 Suppose that $d^+(v)=d^-(v)=1$. Since $D$ is not isomorphic to any member of  the family  $H_5$, it follows that there exits a vertex $a_1$ such that $w\rightarrow a_1 \rightarrow u$. Let $S=\partial^+(\{u,v,w,a_1\})$.  The digraph $D-S$ has a strong component $D_1$ containing
 the $4$-cycle  $(u,v,w,a_1,u)$  and $D-V(D_1)$ contains the arc $xz$. Hence $D$ is $\lambda'$-connected and 
 \begin{eqnarray*}
\lambda'(D) & \le & d^+(u)+d^+(v)+d^+(w)+d^+(a_1)-4\\
& = &  d^+(u)+d^+(v)+d^+(w)+1-4\\
& = &  d^+(u)+d^+(v)+d^+(w)+d^(z)-4=\xi(D).\\
\end{eqnarray*}
 
 Continue assuming that either $d^+(v)\ge 2$ or $d^-(v)=2$. If $d^+(v)\ge 2$ and $d^-(v)=1$, then there exists a vertex $a_1$ such that
 $v\rightarrow a_1$. Further, as $D$ is strong, it follows that $a_1\rightarrow w$. Since $D$ is not isomorphic to any member of  the families  $H_6$ and  $H_7$, the order of $D$ is at least $7$ and there exists a vertex $a_2$ adjacent to $u$ and $w$. If $u\rightarrow a_2 \rightarrow w$, then $a_2$ is not adjacent to $v$ and  $d^+(a_2)=1$. Since $(u,a_2,w,z,u)$ is a $4$-cycle, it follows that 
  \begin{eqnarray*}
\xi((u,a_2,w,z,u))  & \le & d^+(u)+d^+(a_2)+d^+(w)+d^+(z)-4\\
& = &  d^+(u)+1+d^+(w)+d^+(z)-4\\
& < &  d^+(u)+d^+(v)+d^+(w)+d^(z)-4=\xi(D),
\end{eqnarray*}
giving a contradiction.
 
 Suppose that  $w\rightarrow a_2 \rightarrow u$. Let $S=\partial^+(\{u,v,w,a_2\})$. The digraph
 $D-S$  has a strong component $D_1$ containing the $4$-cycle $(u,v,w,a_2,u)$ and $D-V(D_1)$ has the arc $xz$. Therefore $D$ is $\lambda'$-connected and 
 
\begin{eqnarray*}
\lambda'(D)  & \le & d^+(u)+d^+(v)+d^+(w)+d^+(a_2)-4\\
& = &  d^+(u)+d^+(v)+d^+(w)+1-4\\
& = &  d^+(u)+d^+(v)+d^+(w)+d^+(z)-4=\xi(D).
\end{eqnarray*}

If $d^+(v)=1$ and $d^-(v)\ge 2$, then there exists a vertex $a_1$ such that $a_1\rightarrow v$, and since $D$ is 
strong it follows that $u\rightarrow a_1$.  Since $D$ is not isomorphic to any member of  the families  $H_6$ and  $H_7$, then $|V(D)|\ge 7$ and there exists a vertex $a_2$ such that $a_2$ an $w$ are adjacent. 
Suppose that $u\rightarrow a_2\rightarrow w$. If $S=\{ua_1,vw,wx\}\subset A(D)$, then $D-S$  has a strong compononent $D_1$ containing the $4$-cycle $(u,a_2, w,z,u)$and $D-V(D_1)$ has the arc $a_1v$. 
Therefore $D$  is $\lambda'$-connected and 
$$
\lambda'(D)   \le  3 \le d^+(u)+d^+(v)+d^+(w)+d^+(z)-4=\xi(D).
$$ 
Continue assuming that $w\rightarrow a_2\rightarrow u$.
If $S=\partial^+(\{u,v,w,a_2\})\subset A(D)$, then $D-S$ is a restricted arc cut of $D$ such that $D-S$ has a strong component $D_1$ containing the $4$-cycle $(u,v,w,a_2,u)$ and $D-V(D_1)$ has the arc $xz$. Therefore,
\begin{eqnarray*}
\lambda'(D)  & \le &  |S| =  d^+(u)+d^+(v)+d^+(w)+d^+(a_2)-4\\
& = &  d^+(u)+d^+(v)+d^+(w)+1-4\\
& = &  d^+(u)+d^+(v)+d^+(w)+d^+(z)-4=\xi(D).
\end{eqnarray*}
If $d^+(v)\ge 2$ and $d^-(v)\ge 2$, then  there are two vertices $a_1$ and $a_2$ such that $a_1\rightarrow v$ and  
$v\rightarrow a_2$. Since $D$ is strong and has girth $4$ it follows that $u\rightarrow a_1$ and $a_2\rightarrow w$ (may be the case that $a_1\rightarrow w$ or $u\rightarrow a_2$). Since $D$ is not isomorphic to any member of the family $H_6$ there exists a vertex $a_3$ adjacent to $u$ and $w$.
Suppose that $u\rightarrow a_3\rightarrow w$ (may be the case where $a_3=a_1$ or $a_3=a_2$ or $a_3$ is adjacent to $w$).  Let $S=\partial^+(\{u,a_3, w, z\})$. The digraph $D-S$ has a strong component $D_1$ containing de $4$-cycle $(u,a_3,w,z,u)$ and $D-V(D_1)$ has the arc $a_1v$ or $va_2$, according to the case. Therefore $D$ is $\lambda'$-connected and 
\begin{eqnarray*}
\lambda'(D)  & \le &  |S| =  d^+(u)+d^+(a_3)+d^+(w)+d^+(z)-4\\
& \le &  d^+(u)+2+d^+(w)+d^+(z)-4\\
& \le &  d^+(u)+d^+(v)+d^+(w)+d^+(z)-4=\xi(D).
\end{eqnarray*}

Suppose that $w\rightarrow a_3\rightarrow u$. If $S=\partial^+(\{u,v,w,a_3\})\subset A(D)$, then the digraph $D-S$  has a strong component $D_1$ containing de $4$-cycle $(u,v,w,a_3,u)$ and $D-V(D_1)$ has the arc $xz$. Therefore $D$ is $\lambda'$-connected and 
\begin{eqnarray*}
\lambda'(D)  & \le &  |S| =  d^+(u)+d^+(v)+d^+(w)+d^+(a_3)-4\\
& = &  d^+(u)+2+d^+(w)+1-4\\
& = &  d^+(u)+d^+(v)+d^+(w)+d^+(z)-4=\xi(D).
\end{eqnarray*}
\paragraph{\textbf{{Subcase 3.2}}} If $d^+(z)\ge 2$ or $d^-(z)\ge 3$.
Then there exists a vertex $a\notin \{u,w,x\}$ such that $a$ and $z$ are adjacent.  If $z\rightarrow a$, then consider the set of arcs $S=\partial^+(\{u,v,w,x\})$. Therefore the digraph  $D-S$ has a strong component $D_1$ containing de $4$-cycle $(u,v,w,x,u)$ and $D-V(D_1)$ has the arc $az$. 
Consequently, $D$ is $\lambda'$-connected and 
\begin{eqnarray*}
\lambda'(D)  & \le &  |S| =  d^+(u)+d^+(v)+d^+(w)+d^+(x)-4\\
& = &  d^+(u)+d^+(v)+d^+(w)+2-4\\
& \le &  d^+(u)+d^+(v)+d^+(w)+d^+(z)-4=\xi(D).
\end{eqnarray*}
\end{proof}
Now, suppose that $a\rightarrow z$. Since $D$ is strong it follows that either $v\rightarrow a$ or $w\rightarrow a$. If $v\rightarrow a$, let
$S=\partial^+\{u,v,a,z\}$. Therefore $D-S$ has a strong component $D_1$ containing de $4$-cycle $(u,v,a,z,u)$ and $D-V(D_1)$ has the arc $wx$. Therefore $D$ is $\lambda'$-connected and 
\begin{eqnarray*}
\lambda'(D)  & \le &  |S| =  d^+(u)+d^+(v)+d^+(a)+d^+(z)-4\\
& \le &  d^+(u)++d^+(v)+2+d^+(z)-4\\
& \le &  d^+(u)+d^+(v)+d^+(w)+d^+(z)-4=\xi(D).
\end{eqnarray*}
Continue assuming that $w\rightarrow a$. If either $v\rightarrow a$ or there exists a vertex $a'\neq a$ such that $z\rightarrow a'$, then this case is reduced to one of the two previous subcases. Otherwise observe that the condition on the girth implies that neither $a\rightarrow v$ nor the other $u\rightarrow a$. Suppose that $a\rightarrow u$. If $S=\{zu,au\}$, then the digraph $D-S$ has a strong component $D_1$ containing de $4$-cycle $(u,v,w,x,u)$ and $D-V(D_1)$ has the arc $yz$. Therefore $D$ is $\lambda'$-connected and 
\begin{eqnarray*}
\lambda'(D) \le  2   \le  d^+(u)+d^+(v)+d^+(w)+d^+(z)-4=\xi(D).
\end{eqnarray*}
Finally, to prove the left inequality, since every restricted cut is a cut, it follows that $\lambda(D)\le \lambda'(D)$.

  \end{document}